\newtheorem{theo+}              {Theorem}           [section]
\newtheorem{prop+}  [theo+]     {Proposition}
\newtheorem{coro+}  [theo+]     {Corollary}
\newtheorem{lemm+}  [theo+]     {Lemma}
\newtheorem{exam+}  [theo+]     {Example}
\newtheorem{rema+}  [theo+]     {Remark}
\newtheorem{defi+}  [theo+]     {Definition}
\newenvironment{theorem}{\begin{theo+}}{\end{theo+}}
\newenvironment{proposition}{\begin{prop+}}{\end{prop+}}
\newenvironment{corollary}{\begin{coro+}}{\end{coro+}}
\theoremstyle{plain} \theoremstyle{remark}
\newtheorem{remark}{Remark}
\newtheorem*{ack}{\bf Acknowledgments}
\def \r{\mbox{${\mathbb R}$}}
\def\E{/\kern-1.0em \equiv }
\title{Some classifications of biharmonic hypersurfaces with constant scalar curvature}
\author{Shun Maeta$^{*}$  and Ye-Lin Ou$^{**}$ }
\address{Department of Mathematics, \newline\indent Shimane University,\newline\indent Matsue, 690-8504,\newline\indent Japan.
\newline\indent
E-mail:shun.maeta@gmail.com and maeta@riko.shimane-u.ac.jp}
\thanks{$^{*}$The first author is  supported partially by the Grant-in-Aid for Young Scientists(B), No.15K17542, Japan Society for the Promotion of Science, and  partially by JSPS Overseas Research Fellowships 2017-2019 No. 70. The work was done while he was visiting the Department of Mathematics of Texas A $\&$ M University-Commerce as a Visiting Scholar and he is grateful to the department and the university for the hospitality he had received during the visit.  \newline\indent $^{**}$The second author is supported by a grant from the Simons Foundation ($\#427231$, Ye-Lin Ou)}
\address{Department of
Mathematics,\newline\indent Texas A $\&$ M University-Commerce,
\newline\indent Commerce TX 75429,\newline\indent USA.\newline\indent
E-mail:yelin$\_$ou@tamu-commerce.edu}
\begin{document}

\title[Biharmonic hypersurfaces with constant scalar curvature ]{Some classifications of biharmonic hypersurfaces with constant scalar curvature  }
\subjclass[2010]{58E20, 53C12} \keywords{Biharmonic hypersurfaces,
Einstein Manifolds, constant scalar curvature, constant mean curvature.}
\date{08/28/2017}
 \maketitle

\section*{Abstract}
\begin{quote} We give some classifications of biharmonic hypersurfaces with constant scalar curvature. These include biharmonic Einstein hypersurfaces in  space forms, compact biharmonic hypersurfaces with constant scalar curvature in a sphere, and  some complete biharmonic hypersurfaces of constant scalar curvature in space forms and in a non-positively curved  Einstein space.  Our results provide additional cases (Theorem \ref{main2} and Proposition \ref{P28}) that supports the conjecture that a biharmonic submanifold in $S^{m+1}$ has constant mean curvature, and two more cases that support Chen's conjecture on biharmonic hypersurfaces (Corollaries \ref{Ch1}, \ref{Ch2}).
{\footnotesize }
\end{quote}

\section{Introduction} 

Biharmonic maps, as a generalization of harmonic maps,  are maps between Riemannian manifolds which are critical points of the bi-energy functional. Biharmonic submanifolds are the images of biharmonic isometric immersions and they include minimal submanifolds as special cases. As in the study of minimal submanifolds, a fundamental problem in the study of biharmonic submanifolds is to classify non-minimal biharmonic submanifolds (called proper biharmonic submanifolds) in a model space. For example, when the ambient space is a space form, we have the following conjectures which are still far from our reach.\\
\indent {\em Chen's Conjecture on biharmonic submanifolds of Euclidean space} \cite{Ch1} (see also \cite{Ch2}): A biharmonic submanifold in a Euclidean space is  minimal.\\

{\em Balmu\c{s}-Montaldo-Oniciuc Conjectures on biharmonic submanifolds of spheres} \cite{BMO1} (see also \cite{BMO2}):\\
\indent (1) A proper biharmonic submanifold of a sphere has constant mean curvature;\\
\indent (2) The only proper biharmonic hypersurface of $S^{m+1}$ is a part of $S^m(\frac{1}{\sqrt{2}})$ or $S^p(\frac{1}{\sqrt{2}})\times S^q(\frac{1}{\sqrt{2}})$ with $p+q=m,\;\; p\ne q$.\\

A lot of work related to these conjectures has been done since 2000,  some recent developments, partial results and open problems in the topics can be found in the recent survey \cite{Ou3} and the references therein.\\

For a hypersurface $\varphi:(M^{m},g)\rightarrow (N^{m+1}, h)$  in a Riemannian manifold, i.e., a codimensional one isometric immersion,
we choose a local unit normal vector field $\xi$ with respect to which, we have the second fundamental form  $B(X,Y)=b(X, Y)\xi$, where $b:TM\times TM\longrightarrow
C^{\infty}(M)$ is the function-valued second fundamental form. The
Gauss and the Weingarten formulae  read respectively,
\begin{equation}\label{GaussF}
{\tilde \nabla}_{X}Y=({\tilde \nabla}_{X}Y)^{\top}+({\tilde
\nabla}_{X}Y)^{\bot}=\nabla_{X}Y+b(X, Y)\xi,
\end{equation}
and 
\begin{equation}\label{WeingF}
{\tilde \nabla}_{X}\xi=({\tilde \nabla}_{X}\xi)^{\top}+({\tilde
\nabla}_{X}\xi)^{\bot}=-AX+\nabla^{\bot}_{X} \xi=-AX,
\end{equation}
where $A$ is the shape operator of the hypersurface with respect to the unit normal vector $\xi$.\\

We use ${\rm R}^M,\; {\rm Ric}^M, {\rm and}\;\;\;{\rm Scal}^M$ (respectively, ${\rm R}^N,\; {\rm Ric}^N, {\rm and}\;\;\;{\rm Scal}^N$) to denote the Riemannian, Ricci and the scalar curvature of $(M^m, g)$ (respectively,  of $ (N, h)$) with the following conventions:
\begin{eqnarray}\notag
{\rm R}^M(X, Y, Z, W)&=&\langle {\rm R}^M(Z, W)Y, X\rangle, \\\notag
{\rm R}^M(Z, W)Y&=&\nabla_Z\nabla_WY-\nabla_W\nabla_ZY-\nabla_{[Z,W]}Y,\\\notag
{\rm Ric}^M(X,Y)&=&\sum_{i=1}^m{\rm R}^M(X, e_i, Y, e_i)\; {\rm for\; an\; orthonormal \;base \;} \{e_i\} .
\end{eqnarray}
 Using  these, together with (\ref{GaussF}) and (\ref{WeingF}), we have the Gauss equation
\begin{equation}\label{GHyp}
{\rm R}^N(X,Y,Z,W)= {\rm R}^M(X, Y, Z, W)+b(X, W)b(Y, Z)-b(X, Z)b(Y, W),
\end{equation}
where
\begin{equation}
b(X,Y)=\langle AX, Y\rangle=\langle B(X, Y), \xi\rangle,
\end{equation}
for any $X, Y, Z, W \in TM$.\\

From (\ref{GHyp}) we can have the relationship between the Ricci
curvatures of the hypersurface and the ambient space
\begin{equation}\label{RI}
 {\rm Ric}^N(X,Y)= {\rm Ric}^M(X,Y)+\langle AX,AY\rangle
-mH\langle AX,Y\rangle+ {\rm R}^N(X,\xi,Y,\xi),
\end{equation}
 and the relationship between the scalar curvatures are
\begin{equation}\label{SCAL}
 {\rm Scal}^N= {\rm Scal}^M+|A|^2 -m^2H^2+2{\rm Ric}^N(\xi,\xi).
\end{equation}

It was proved in \cite{Ou1} that a hypersurface $\varphi:M^{m}\longrightarrow N^{m+1}$  with mean curvature vector $\eta=H\xi$ is biharmonic if and only if
\begin{equation}\label{BHEq}
\begin{cases}
\Delta H-H |A|^{2}+H{\rm
Ric}^N(\xi,\xi)=0,\\
 2A\,({\rm grad}\,H) +\frac{m}{2} {\rm grad}\, H^2
-2\, H \,({\rm Ric}^N\,(\xi))^{\top}=0,
\end{cases}
\end{equation}
where ${\rm Ric}^N : T_qN\longrightarrow T_qN$ denotes the Ricci
operator of the ambient space defined by $\langle {\rm Ric}^N\, (Z),
W\rangle={\rm Ric}^N (Z, W)$.

In particular, for biharmonic hypersurfaces in an Einstein space, we have 
\begin{corollary}\cite{Ou1}
A hypersurface  $\varphi:M^{m}\longrightarrow (N^{m+1}, h)$ of an Einstein manifold with ${\rm Ric}^N=\lambda h$ is biharmonic if and only if its mean curvature function $H$ solves the following equation
\begin{equation}\label{Ein}
\begin{cases}
\Delta H-H ( |A|^{2}-\lambda)=0,\\
 A\,({\rm grad}\,H) +\frac{m}{2} H {\rm grad}\, H
=0.
\end{cases}
\end{equation}
\end{corollary}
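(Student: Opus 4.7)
The plan is to derive the corollary as an immediate specialization of the general biharmonic hypersurface system \eqref{BHEq} under the Einstein assumption ${\rm Ric}^N=\lambda h$, so no genuinely new computation is needed; the whole argument is bookkeeping about how ${\rm Ric}^N$ interacts with a unit normal $\xi$.

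First I would evaluate the normal-normal component: since $\xi$ is a unit vector field along $\varphi$ and ${\rm Ric}^N=\lambda h$, we get ${\rm Ric}^N(\xi,\xi)=\lambda h(\xi,\xi)=\lambda$. Plugging this into the first equation of \eqref{BHEq} gives $\Delta H - H(|A|^{2}-\lambda)=0$ immediately, which is the first equation of \eqref{Ein}.

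Next I would handle the tangential piece of the Ricci operator. The Einstein condition means the Ricci operator of $N$ satisfies ${\rm Ric}^{N}(Z)=\lambda Z$ for every $Z$, so ${\rm Ric}^{N}(\xi)=\lambda\xi$ is itself purely normal, and hence $({\rm Ric}^{N}(\xi))^{\top}=0$. Using also the elementary identity ${\rm grad}\,H^{2}=2H\,{\rm grad}\,H$, the second equation of \eqref{BHEq} reduces to
\begin{equation*}
2A({\rm grad}\,H)+mH\,{\rm grad}\,H=0,
\end{equation*}
and dividing by $2$ produces the second equation of \eqref{Ein}.

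Conversely, reading the same two substitutions in reverse shows that a hypersurface into an Einstein ambient manifold satisfies \eqref{BHEq} if and only if it satisfies \eqref{Ein}, which proves the stated equivalence. There is no genuine obstacle here; the only thing to be careful about is confirming that ${\rm Ric}^{N}(\xi)$ is proportional to $\xi$ under the Einstein hypothesis, so that its tangential projection vanishes and the otherwise awkward term $({\rm Ric}^{N}(\xi))^{\top}$ drops out cleanly.
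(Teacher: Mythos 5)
Your proposal is correct and is exactly the intended derivation: the paper states this corollary without proof (citing \cite{Ou1}), and the only content is the specialization of \eqref{BHEq} via ${\rm Ric}^N(\xi,\xi)=\lambda$ and $({\rm Ric}^N(\xi))^{\top}=(\lambda\xi)^{\top}=0$, followed by ${\rm grad}\,H^2=2H\,{\rm grad}\,H$ and division by $2$. Nothing is missing.
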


In this note, we give some classifications of biharmonic hypersurfaces with constant scalar curvature. These include biharmonic Einstein hypersurfaces in  space forms, compact biharmonic hypersurfaces with constant scalar curvature in a sphere, and  some complete biharmonic hypersurfaces with constant scalar curvature in space forms and in a non-positively curved  Einstein space.  Our results provide a further case (Theorem \ref{main2}) that supports Balmu\c{s}-Montaldo-Oniciuc conjecture: a biharmonic submanifold in $S^{m+1}$ has constant mean curvature, and two more cases that support Chen's conjecture on biharmonic hypersurfaces (Corollaries \ref{Ch1}, \ref{Ch2}).\\

\section{Biharmonic hypersurfaces with constant scaler curvature in Einstein spaces}\label{SSC}
 
 Recall that a Riemannian manifold $(M^m, g)$ is called an Einstein space if its Ricci curvature if proportional to the metric, i.e., ${\rm Ric}^M=\lambda g$. It is well known that every two-dimensional manifold is an Einstein space; any space form is of Einstein, and any $3$-dimensional Einstein space has to be a space form. One can also check that for $m\ge 3$, if $(M^m, g)$ is  an Einstein space with ${\rm Ric}^M=\lambda g$, then $\lambda$ has to be a constant and hence an Einstein space $(M^m, g)$ of dimension $m\ge 3$ has constant scalar curvature since ${\rm Scal}^M=m\lambda$.\\
 
 Our first result is the following theorem which gives a classification of biharmonic Einstein hypersurfaces in a space form.
\begin{theorem}\label{MT}
An Einstein hypersurface $M^m \hookrightarrow (N^{m+1}(C), h)\;( m\ge 3)$  in a space form  is biharmonic  if and only if it is minimal or $|A|^2=mC$. Furthermore, in the latter case, the hypersurface has  positive scalar curvature, i.e.,  ${\rm Scal}^M= m(m-2) C+m^2H^2>0$.
\end{theorem}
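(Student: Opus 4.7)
The plan is to combine the simplified biharmonic system (\ref{Ein}) (applied with $\lambda = mC$, since $N^{m+1}(C)$ is Einstein) with the Einstein condition on $M$ to force the mean curvature $H$ to be constant, and then read off the classification from the first biharmonic equation.

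First I would use (\ref{RI}) together with the space-form identities ${\rm Ric}^N = mC\, h$ and ${\rm R}^N(X,\xi, Y, \xi) = C\langle X, Y\rangle$, and the Einstein condition ${\rm Ric}^M = \mu g$ (with $\mu$ constant since $m \ge 3$), to derive the pointwise polynomial identity
$$A^2 - mH\, A + \bigl(\mu - (m-1)C\bigr) I = 0$$
satisfied by the shape operator.

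The main obstacle is proving that $H$ is constant. I would argue by contradiction. Let $U = \{p \in M : ({\rm grad}\, H)_p \ne 0\}$ and $V = U \cap \{H \ne 0\}$, which is open and dense in $U$. On $V$, the second equation of (\ref{Ein}) reads $A({\rm grad}\, H) = -\tfrac{mH}{2}\, {\rm grad}\, H$, exhibiting $-mH/2$ as an eigenvalue of $A$. Substituting $\alpha = -mH/2$ into the polynomial identity immediately yields $\tfrac{3m^2H^2}{4} = (m-1)C - \mu$, so $H^2$ is constant on $V$; combined with $H \ne 0$ there, this forces ${\rm grad}\, H = 0$ on $V$, contradicting $V \subset U$. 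Hence $V = \emptyset$, and therefore $U = \emptyset$, so $H$ is constant on $M$.

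With $H$ constant, the first equation of (\ref{Ein}) reduces to $H(|A|^2 - mC) = 0$, yielding the claimed dichotomy $H = 0$ or $|A|^2 = mC$. For the converse, the minimal case trivially satisfies (\ref{Ein}); if $|A|^2 = mC$, equation (\ref{SCAL}) gives
$${\rm Scal}^M = m(m-1)C - |A|^2 + m^2H^2 = m(m-2)C + m^2H^2,$$
which is constant by the Einstein hypothesis ($m \ge 3$), forcing $H^2$ and hence $H$ constant, so both equations of (\ref{Ein}) are satisfied. Finally, in the non-minimal branch $|A|^2 = mC > 0$ forces $C > 0$, and then $m(m-2)C > 0$ (using $m \ge 3$) together with $m^2H^2 \ge 0$ gives ${\rm Scal}^M > 0$, as claimed.
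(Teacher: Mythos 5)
Your proof is correct and follows essentially the same route as the paper: both arguments combine the contracted Gauss equation (\ref{RI}) with the second biharmonic equation $A(\nabla H)=-\tfrac{m}{2}H\nabla H$ and the Einstein condition to obtain $\tfrac{3}{4}m^2H^2=(m-1)C-\mu$ on the set where $\nabla H\neq 0$, forcing $H$ to be constant, after which the first equation of (\ref{Ein}) yields the dichotomy and (\ref{SCAL}) gives the scalar curvature. Your phrasing via the operator identity $A^2-mHA+(\mu-(m-1)C)I=0$ and its evaluation at the eigenvalue $-mH/2$ is just a repackaging of the paper's substitution $X=Y=\nabla H$ into (\ref{RI}), and your explicit verification of the converse direction is a welcome (if routine) addition that the paper leaves implicit.
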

\begin{proof}
Suppose that the mean curvature function $H$ is not constant, then there exists an open neighborhood $U$ of $M$ on which $\nabla H\ne 0$.
Substituting $X=Y=\nabla H$ into (\ref{RI}) gives

\begin{equation}
{\rm Ric}^M(\nabla H, \nabla H)=(m-1)C |\nabla H|^2+ mHg(A(\nabla H), \nabla H)- g( A(\nabla H), A(\nabla H)).
\end{equation}
Substituting the second equation $A(\nabla H)=-\frac{m}{2}H\nabla H$ of the biharmonic equation for a hypersurface into the above equation we have
\begin{equation}\label{43}
{\rm Ric}^M(\nabla H, \nabla H)=\left((m-1)C-\frac{3}{4}m^2H^2\right)|\nabla H|^2.
\end{equation}
If $(M^m, g)$ is an Einstein hypersurface with ${\rm Ric}^M=\mu g$ for some constant $\mu$, then (\ref{43}) becomes
\begin{equation}\label{GD21}
\mu|\nabla H|^2=\left((m-1)C-\frac{3}{4}m^2H^2\right)|\nabla H|^2.
\end{equation}
Since $\nabla H\ne 0$ on $U\subset M$, we have $\mu=(m-1)C-\frac{3}{4}m^2H^2$, which implies $H$ is a constant on $U\subset M$ since $\mu$ is a constant. This contradicts the assumption that $\nabla H\ne 0$ on $U\subset M$. The contradiction shows that the mean curvature function $H$ has to be a constant. It follows from the first equation  $\Delta H-H ( |A|^{2}-mC)=0$ of (\ref{Ein}) that either $H=0$ the hypersurface is minimal, or $|A|^{2}=mC$, and in this case, $C>0$. Using  the scalar curvature ${\rm Scal}^N=(m+1)mC>0$ of $(N^{m+1}(C),h)$, $|A|^{2}=mC$ and Equation (\ref{SCAL})  we  have ${\rm Scal}^M= m(m-2) C+m^2H^2>0$.

Thus, we obtain the theorem.
\end{proof}

As an immediate consequence of Theorem \ref{MT}, we have
\begin{corollary}\label{Ch1}
A  biharmonic Einstein hypersurface of Euclidean space $\r^{m+1}$ or hyperbolic space $H^{m+1}$  is minimal; A proper biharmonic Einstein hypersurface in $S^{m+1}$ has constant mean curvature and $|A|^2=m$.
\end{corollary}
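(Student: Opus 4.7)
The plan is to specialize Theorem \ref{MT} to each of the three ambient space forms and do a case analysis on the sign of the sectional curvature $C$. Since Theorem \ref{MT} has already done the real work, the corollary should come out essentially by inspection of the alternative $|A|^2 = mC$ in each ambient geometry.

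For $N = \mathbb{R}^{m+1}$, i.e.\ $C = 0$, Theorem \ref{MT} leaves either minimality or $|A|^2 = 0$. The second alternative forces $A \equiv 0$, so the hypersurface is totally geodesic and in particular minimal; both branches of the theorem collapse to the same conclusion. For $N = H^{m+1}$ we have $C < 0$, so the alternative $|A|^2 = mC$ would force the nonnegative quantity $|A|^2$ to be negative, which is impossible; hence only the minimal branch survives.

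For $N = S^{m+1}$, i.e.\ $C = 1$, the word ``proper'' rules out the minimal option, so Theorem \ref{MT} yields $|A|^2 = m$ at once. For the constant mean curvature assertion I would appeal not only to the statement but to the argument inside the proof of Theorem \ref{MT}: that argument first establishes that $H$ must be a constant (by substituting $A(\nabla H) = -\tfrac{m}{2} H \nabla H$ into the Ricci identity, using the Einstein condition ${\rm Ric}^M = \mu g$, and deriving a contradiction on any open set where $\nabla H \neq 0$), and only afterwards passes to the dichotomy on $|A|^2$. Hence both parts of the sphere conclusion are already in hand.

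The only subtlety, genuinely mild, is to flag in the Euclidean case that $|A|^2 = 0$ is not a separate scenario but simply a stronger form of minimality; once that is noted, the corollary reduces to reading Theorem \ref{MT} three times.
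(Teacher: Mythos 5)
Your proposal is correct and follows the same route the paper intends: the paper gives no separate proof, presenting the corollary as an immediate specialization of Theorem \ref{MT} to $C=0$, $C<0$, and $C=1$. Your case analysis (including the observation that $|A|^2=mC$ is vacuous or redundant for $C\le 0$, and that constancy of $H$ in the spherical case is established inside the proof of Theorem \ref{MT}) is exactly the reading the authors have in mind.
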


It was proved in \cite{Ch} (see also \cite{BMO2}) that form $m\ge 2$, if a compact biharmonic hypersurface in sphere $S^{m+1}(1)$ with  the squared norm of the second fundamental form satisfies $ |A|^2\le m$, then $|A|^2=0, {\rm or}\; |A|^2=m$ and it has constant mean curvature. Also, in a recent work
  \cite{Fu},  Fu proved that a biharmonic hypersurface with constant scalar curvature in $5$-dimensional space forms $M^5(C)$ has constant mean curvature, and later in \cite{FH}, it was proved that a biharmonic hypersurface with constant scalar curvature and at most six distinct principal curvatures in space forms $M^{m+1}(C)$ has constant mean curvature. Our next theorem shows that the same result in \cite{FH} holds when we replace the principal curvature assumption by the compactness of the hypersurface.

\begin{theorem}\label{main2}
A compact hypersurface with constant scalar curvature $(M^m,g)\hookrightarrow S^{m+1}$  in a sphere is biharmonic if and only if it is minimal, or it has nonzero constant mean curvature, and $|A|^2=m$.
\end{theorem}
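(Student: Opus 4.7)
The plan is to reduce the problem to the regime $|A|^2 \le m$ and then invoke the theorem of \cite{Ch} quoted just before the statement, which handles exactly that case for compact biharmonic hypersurfaces of $S^{m+1}$. The converse direction (minimal, or CMC with $|A|^2=m$, implies biharmonic) is immediate from (\ref{Ein}) applied with $\lambda=m$: in both cases $\nabla H=0$ and $\Delta H=0$, so both equations become tautologies.

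The first step is to exploit the constant scalar curvature assumption together with (\ref{SCAL}). For the ambient $S^{m+1}$ one has ${\rm Ric}^N=m\,h$ and ${\rm Scal}^N=m(m+1)$, so (\ref{SCAL}) rearranges to
\begin{equation*}
|A|^2 - m^2 H^2 \;=\; m(m-1) - {\rm Scal}^M,
\end{equation*}
and the hypothesis that ${\rm Scal}^M$ is constant then supplies a real number $s$ with $|A|^2 = s + m^2 H^2$ everywhere on $M$. The first equation of the biharmonic system (\ref{Ein}) (with $\lambda=m$) correspondingly collapses to the scalar PDE $\Delta H = H(s + m^2 H^2 - m)$.

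The key analytic step is a short maximum principle argument on the compact manifold $M$. Let $p_0 \in M$ be a point where $H^2$ attains its maximum, so $\nabla(H^2)(p_0)=0$ and $\Delta(H^2)(p_0)\le 0$. A direct computation using the scalar biharmonic PDE above gives
\begin{equation*}
\Delta(H^2) \;=\; 2H\Delta H + 2|\nabla H|^2 \;=\; 2H^2(|A|^2-m) + 2|\nabla H|^2.
\end{equation*}
If $H(p_0)=0$ then $H^2 \le H^2(p_0)=0$ forces $H\equiv 0$ and the hypersurface is minimal. Otherwise $H(p_0)\ne 0$, which combined with $\nabla(H^2)(p_0)=0$ gives $\nabla H(p_0)=0$, and the inequality $\Delta(H^2)(p_0)\le 0$ then yields $|A|^2(p_0)\le m$. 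Propagating via the identity $|A|^2 = s + m^2 H^2$ and the global bound $H^2\le H^2(p_0)$, we conclude $|A|^2 \le m$ at every point of $M$.

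Once this pointwise bound is established, the cited theorem of \cite{Ch} forces either $|A|^2 \equiv 0$ (totally geodesic, a subcase of the minimal branch already excluded here) or $|A|^2 \equiv m$ together with constant mean curvature, as claimed. The main obstacle is not computational but conceptual: one needs to notice that the simple identity $|A|^2 = s + m^2 H^2$ transports the one-point inequality $|A|^2(p_0)\le m$ coming from the maximum principle into the global estimate $|A|^2\le m$, which is precisely the hypothesis of the Chen-type result cited above; once this bridge is recognized, the rest is essentially a citation.
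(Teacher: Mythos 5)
Your proof is correct, but it follows a genuinely different route from the paper's. The paper proves directly that $H$ is constant via the Bochner formula for $|\nabla H|^2$: it combines the Hessian estimate $|\nabla \mathrm{d}H|^2\ge \frac1m(\Delta H)^2$, the Ricci term computed from the Gauss equation together with the \emph{tangential} biharmonic equation $A(\nabla H)=-\frac m2 H\nabla H$ (equation (\ref{43})), and the identity $\nabla|A|^2=2m^2H\nabla H$ coming from constant scalar curvature; integrating over the compact $M$ and invoking Newton's inequality forces $\int|\nabla H^2|^2=0$. You instead use the constant scalar curvature identity $|A|^2=s+m^2H^2$ to turn the normal biharmonic equation into the scalar PDE $\Delta H=H(|A|^2-m)$, apply the maximum principle to $H^2$ at its maximum point $p_0$ to get $|A|^2(p_0)\le m$, propagate this to the global bound $|A|^2\le m$ via the same identity, and then hand the problem to the quoted theorem of J.~H.~Chen. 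Your argument is shorter, avoids the Bochner formula, the Gauss equation for Ricci, and the tangential equation entirely, and makes transparent why constant scalar curvature is the right hypothesis ($|A|^2$ and $H^2$ attain their maxima simultaneously); the price is that the analytic core is outsourced to Chen's theorem as a black box, whereas the paper's argument is self-contained and, as a byproduct, is the template reused in Theorem \ref{CPTE} and Propositions \ref{main} and \ref{P28}. One small caveat: Chen's result is stated for $m\ge 2$, so your proof inherits that restriction — but so does the paper's, since its final constant $5m^2-4m-4$ is only positive for $m\ge 2$. Your verification of the converse direction via (\ref{Ein}) is also fine (the paper omits it).
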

\begin{proof}
 If the scalar curvature ${\rm Scal}^M$ of the hypersurface is constant, then, by Equation (\ref{SCAL}), we have $|A|^2=m^2H^2+{\rm constant}$, from which we have
\begin{equation}
\nabla|A|^2=2m^2H\nabla H.
\end{equation}
Using this, together with the first equation of (\ref{Ein}) with $\lambda=m$, we have
\begin{equation}\label{GD2}
\nabla\Delta H=\nabla[(|A|^2-m)H]=(|A|^2-m+2m^2H^2)\nabla H.
\end{equation}

On the other hand, we have the following estimate of the squared norm of the Hessian of $H$
\begin{align}\notag
|\nabla \rm d H|^2
=&\sum_{i,j=1}^m[\nabla \rm d H (e_i, e_j)]^2
\geq \sum_{i=1}^m[\nabla \rm d H (e_i, e_i)]^2
\geq \frac{1}{m}\left(\sum_{i=1}^m\nabla \rm d H (e_i, e_i)\right) ^2\\\label{GD3}
= &\frac{1}{m}(\Delta H)^2.
\end{align}

Substituting (\ref{43}), (\ref{GD2}) and (\ref{GD3}) into the  Bochner formula for $|\nabla H|^2$, we have
\begin{align}\label{DNH}
\frac{1}{2}\Delta|\nabla H|^2
=& |\nabla \rm d  H|^2+{\rm Ric}^M(\nabla H,\nabla H)+\langle \nabla H,\nabla \Delta H\rangle \\\notag
\ge& \frac{1}{m}(\Delta H)^2+(|A|^2-1+\frac{5}{4}m^2H^2)|\nabla H|^2.
\end{align}

Since $M$ is compact, we integrate both sides of  (\ref{DNH}) to have

\begin{align}\label{GD4}
 \int_M [ \frac{1}{m}(\Delta H)^2+(|A|^2-1+\frac{5}{4}m^2H^2)|\nabla H|^2] dv_g\le \int_M\frac{1}{2}\Delta|\nabla H|^2 dv_g=0.
\end{align}

Using compactness of $M$, $({\ref{GD2}})$, and the divergence theorem we have
\begin{align}\notag
\frac{1}{m}\int_M \Delta H\Delta H dv_g=&\frac{1}{m}\int_M \Delta H{\rm div}(\nabla H) dv_g
= -\frac{1}{m}\int_M \langle \nabla \Delta H ,\nabla H\rangle dv_g\\\label{GD100}
=& -\frac{1}{m}\int_M (|A|^2-m+2m^2H^2)|\nabla H|^2 dv_g,
\end{align}


Substituting this into (\ref{GD4}) we have
\begin{align}\label{GD5}
0\le  \int_M \left( (1-\frac{1}{m})|A|^2+\frac{m}{4}(5m-8)H^2\right)|\nabla H|^2 dv_g\le 0.
\end{align}

It follows that  $[(1-\frac{1}{m})|A|^2+\frac{m}{4}(5m-8)H^2]|\nabla H|^2=0$ from which, together with Newton's inequality $|A|^2\ge m H^2$, we have
\begin{equation}
0=[ (1-\frac{1}{m})|A|^2+\frac{m}{4}(5m-8)H^2]|\nabla H|^2\ge \frac{1}{16}(5m^2-4m-4)|\nabla H^2|^2.
\end{equation}

From this, we conclude that  $H=\rm constant$.  In the case if $H=\rm constant \ne 0$, we use the first equation of (\ref{Ein}) to have $|A|^2=m$. Thus, we obtain the theorem.
\end{proof}

\begin{remark}
(i)  Notice that our Theorem \ref{main2}, together with the results in \cite{Ch} and \cite{BMO2}, implies that for a compact biharmonic hypersurface in a sphere, if one of the data: ${\rm Scal}^M, \; H$ and $|A|^2$ is constant, then so are the other two.\\
(ii) We would like to point out that according to a classical result of Chern-Do Carmo-Kobayashi \cite{CCK}, any minimal hypersurface in $S^{m+1}$ with $|A|^2=m$ is locally the Clifford tori $S^p\times S^{m-p}$. On the other hand, a hypersurface of  $S^{m+1}$ with constant mean curvature  and $|A|^2=m$ is biharmonic. So it would be very interesting to classify hypersurfaces of  $S^{m+1}$ with constant mean curvature  and $|A|^2=m$. This would be an important case to solve the second one of Balmu\c{s}-Montaldo-Oniciuc conjectures.\\
(iii) Finally, we remark that there are infinitely many compact hypersurfaces of constant scalar curvatures in sphere. In fact, it was proved in \cite{Ej} that there exist countably many isometric immersions of $S^1\times S^{m-1}$ into a sphere $S^{m+1}$ so that $S^1\times S^{m-1}$ is a warped product of constant scalar curvature $m(m-1)$ with respect to the induced metric.\\
\end{remark}

\begin{theorem}\label{CPTE}
A compact Einstein hypersurface $M^m \hookrightarrow (N^{m+1}, h)$  in an Einstein manifold with ${\rm Ric}^N=\lambda h$  is biharmonic  if and only if it is minimal or $|A|^2=\lambda$. Furthermore, in the latter case, the hypersurface has  positive scalar curvature, i.e.,  ${\rm Scal}^M=(m-2)\lambda+m^2H^2>0$.
\end{theorem}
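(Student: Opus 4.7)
The plan is to adapt the compact-Bochner argument from the proof of Theorem \ref{main2} to the Einstein-in-Einstein setting. All the machinery there---the Bochner formula for $|\nabla H|^2$, the Kato-type lower bound $|\nabla dH|^2\ge(\Delta H)^2/m$, the biharmonic system (\ref{Ein}) with $\lambda$ in place of $m$, and $\int_M\Delta|\nabla H|^2\,dv_g=0$ by compactness---remains available. What must replace the use of constant scalar curvature in Theorem \ref{main2} is an explicit algebraic identity for the Einstein constant $\mu$ of $M$.

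First I would trace the Gauss--Ricci relation (\ref{RI}) over an orthonormal frame of $TM$, using ${\rm Ric}^M=\mu g$, ${\rm Ric}^N=\lambda h$, $\sum_i {\rm R}^N(e_i,\xi,e_i,\xi)={\rm Ric}^N(\xi,\xi)=\lambda$, and ${\rm tr}(A)=mH$. This produces
\begin{equation*}
\mu=\frac{m-1}{m}\lambda-\frac{1}{m}|A|^2+mH^2.
\end{equation*}
Since $\mu$ and $\lambda$ are constants, this forces $|A|^2-m^2H^2$ to be constant on $M$, and hence $\nabla|A|^2=2m^2H\,\nabla H$, exactly the identity that drives the proof of Theorem \ref{main2}. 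Combined with $\Delta H=H(|A|^2-\lambda)$ from (\ref{Ein}), it gives $\nabla\Delta H=(|A|^2-\lambda+2m^2H^2)\nabla H$.

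Next I would apply the Bochner formula to $|\nabla H|^2$, insert the Kato estimate (\ref{GD3}), integrate over the compact $M$, and use the divergence theorem to rewrite $\int_M(\Delta H)^2\,dv_g$ as $-\int_M\langle\nabla H,\nabla\Delta H\rangle\,dv_g=-\int_M(|A|^2-\lambda+2m^2H^2)|\nabla H|^2\,dv_g$. Substituting the displayed expression for $\mu$ then cancels every $\lambda$-term and reduces the inequality to
\begin{equation*}
\int_M\Bigl[\tfrac{m-2}{m}|A|^2+m(2m-1)H^2\Bigr]|\nabla H|^2\,dv_g\le 0.
\end{equation*}
For $m\ge 3$ both coefficients in the bracket are strictly positive, so the integrand is pointwise non-negative and must vanish identically. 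A short continuity argument (at any point with $\nabla H\ne 0$ the bracket would have to vanish, forcing $|A|^2=H=0$ in a neighborhood and hence $\nabla H=0$ there, a contradiction) then shows $H$ is constant. The first equation of (\ref{Ein}) yields $H=0$ or $|A|^2=\lambda$, and in the latter case the displayed expression for $\mu$ gives ${\rm Scal}^M=m\mu=(m-2)\lambda+m^2H^2$; in the non-minimal subcase Newton's inequality $|A|^2\ge mH^2$ forces $\lambda=|A|^2>0$, so ${\rm Scal}^M>0$ for $m\ge 3$.

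The main delicate point will be the coefficient arithmetic that turns the Bochner integrand into a manifestly non-negative expression: the cancellation of every $\lambda$-dependence and the positivity of $\tfrac{m-2}{m}$ and $m(2m-1)$ for $m\ge 3$ are precisely what make the argument close up. Once that arithmetic is verified, the rest is routine bookkeeping along the template of Theorem \ref{main2}.
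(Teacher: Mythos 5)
Your proposal is correct and follows essentially the same route as the paper's own proof: the same identity $\mu=\frac{m-1}{m}\lambda-\frac{1}{m}|A|^2+mH^2$ (the paper gets it directly from the traced Gauss equation (\ref{SCAL})), the same Bochner--Kato estimate, the same integration by parts, and the same final integrand $\frac{m-2}{m}|A|^2+m(2m-1)H^2$ with all $\lambda$-terms cancelling. The only cosmetic difference is at the very end: the paper concludes $H$ is constant by bounding the integrand below by $\frac{1}{4}m(2m-1)|\nabla H^2|^2$, which avoids your separate continuity argument and does not need $m\ge 3$ at that step.
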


\begin{proof}
Suppose the hypersurface $M$ is Einstein with ${\rm Ric}^M=\mu g$, then, by $(\ref{SCAL})$, we have
$$(m+1)\lambda=m\mu+|A|^2-m^2H^2+2\lambda.$$
 Hence,
\begin{equation}\label{mu}
\mu=(1-\frac{1}{m})\lambda-\frac{1}{m}|A|^2+mH^2.
\end{equation}
It follows that
\begin{eqnarray}\label{NA1}
\langle \nabla H,\nabla \Delta H\rangle
&=& \langle \nabla H, \nabla (H|A|^2-\lambda H)\rangle\\\notag
&=& |A|^2|\nabla H|^2+H\langle \nabla H,\nabla |A|^2\rangle-\lambda |\nabla H|^2\\\notag
&=& (|A|^2-\lambda +2m^2H^2)|\nabla H|^2,
\end{eqnarray}
where the first equality was obtained by using  the first equation of $(\ref{Ein})$, and  the third equality follows from $(\ref{SCAL})$ and the fact that the scalar curvature of an Einstein hypersurface is constant.\\

Using these and a similar computation used in obtaining (\ref{DNH}) we have,
\begin{align}\notag
\frac{1}{2}\Delta|\nabla H|^2
=& |\nabla \rm d  H|^2+{\rm Ric}^M(\nabla H,\nabla H)+\langle \nabla H,\nabla \Delta H\rangle \\\notag
\ge& \frac{1}{m}(\Delta H)^2+\mu|\nabla H|^2+(|A|^2-\lambda+2m^2H^2)|\nabla H|^2\\\label{GD60}
=& \frac{1}{m}(\Delta H)^2+\Big(-\frac{1}{m}\lambda+(1-\frac{1}{m})|A|^2+m(2m+1)H^2\Big)|\nabla H|^2.
\end{align}

Similar to (\ref{GD100}),  we have 
\begin{align}\notag
\frac{1}{m}\int_M \Delta H\Delta H dv_g=& \frac{1}{m}\int_M \Delta H{\rm div}(\nabla H) dv_g
= -\frac{1}{m}\int_M \langle \nabla \Delta H ,\nabla H\rangle dv_g\\ \label{GD101}
=& -\frac{1}{m}\int_M (|A|^2-\lambda+2m^2H^2)|\nabla H|^2 dv_g.
\end{align}

 Integrating both sides of (\ref{GD60}) and using the compactness of $M$ and $(\ref{GD101})$ we obtain
\begin{align}
0 \geq & \int_M\Big(\frac{1}{m}(m-2)|A|^2+m(2m-1)H^2\Big)|\nabla H|^2dv_g\\\notag
\geq& \;\frac{1}{4}m(2m-1)\int_M|\nabla H^2|^2dv_g.
\end{align}

It follows that  $H$ is constant. If $H=0,$ then $M$ is minimal. If $H\not=0,$ by the first equation of $(\ref{Ein})$, we have $|A|^2=\lambda$.
Thus we have ${\rm Scal}^M=m\mu=(m-2)\lambda+m^2H^2>0$.
\end{proof}

For the classification of  complete biharmonic hypersurfaces, we notice that it was proved in \cite{NU}, \cite{Ma1} and \cite{Luo1} that a complete hypersurface $M^m\hookrightarrow (N^{m+1},h)$ in a manifold of non-positive Ricci curvature with mean curvature function $H\in L^p(M)$ for some $0<p<\infty$ is minimal. We will give a classification of biharmonic hypersurfaces of constant scalar  curvatures in an Einstein manifold using  a condition on the maximum rate of change of the mean curvature function.  For that purpose, we will need the following maximum principles:
\begin{theorem}[\cite{Y},~\cite{KL} etc.]\label{MP}
Let $M$ be a complete Riemannian manifold and $f$ is smooth function on $M$.
If one of the  following condition is satisfied, then $f$ is constant.

(i) $M$ has Ricci curvature bounded from below and $\Delta f\geq \varepsilon f$ (for some $\varepsilon>0$) for upper bounded function $f\geq0$.

(ii) $\Delta f\geq0$ for $f\geq 0$ and $f\in L^p(M)$ for some $1<p<\infty$.

(iii) $f \in L^1(M)$, $\Delta f\geq0$, $f\geq0$ and the Ricci curvature is bounded from below by $-c\{1+r^2(x)\}$, where $r(x)$ is the distance function on $M$.
\end{theorem}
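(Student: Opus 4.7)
The plan is to handle the three parts of Theorem \ref{MP} separately, since each calls for a different technique from the classical gradient-estimate literature associated with Yau and Karp. Since the statement is drawn from the references \cite{Y},\cite{KL}, the goal is to recover each from standard arguments rather than to introduce new ideas.

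For part (i), I would invoke the Omori--Yau weak maximum principle: completeness of $M$ together with a lower Ricci bound guarantees that for every upper-bounded smooth $f$ there is a sequence $\{x_k\}\subset M$ with $f(x_k)\to\sup_M f$ and $\limsup_{k\to\infty}\Delta f(x_k)\le 0$. Substituting into the hypothesis $\Delta f\ge\varepsilon f$ forces $\varepsilon\sup_M f\le 0$, and combined with $f\ge 0$ this gives $f\equiv 0$, which is constant. The only nontrivial input is the Omori--Yau principle itself, which rests on Hessian comparison applied to the squared distance function.

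For part (ii), I would run Yau's cut-off integration argument. Fix a basepoint $o\in M$ and choose $\eta_R\in C_c^\infty(M)$ with $\eta_R\equiv 1$ on $B_R(o)$, $\mathrm{supp}\,\eta_R\subset B_{2R}(o)$, and $|\nabla\eta_R|\le C/R$. Multiplying $\Delta f\ge 0$ by $\eta_R^2 f^{p-1}$, integrating by parts, and applying Cauchy--Schwarz with absorption produces
\[
\int_M\eta_R^2 f^{p-2}|\nabla f|^2\,dv_g\le \frac{C}{R^2}\int_{B_{2R}}f^p\,dv_g.
\]
Since $f\in L^p(M)$, the right-hand side vanishes as $R\to\infty$, so $\nabla f^{p/2}\equiv 0$ and $f$ is constant. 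For part (iii), the plan is to combine the same subharmonic cut-off method with a volume-growth estimate of Karp type. The Ricci lower bound $-c(1+r^2)$ yields $\mathrm{Vol}(B_R)\le e^{CR^2}$ via Bishop--Gromov comparison with a warped-product model; against this one chooses a radial cut-off whose gradient decay is calibrated to the growth, pairs it with $\Delta f\ge 0$, and uses the $L^1$ bound on $f$ to show the annular term on $B_{2R}\setminus B_R$ vanishes in the limit.

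\textbf{Main obstacle.} The delicate part is (iii). The hypotheses are borderline: $L^1$ is strictly weaker than $L^p$ for $p>1$, and the variable Ricci lower bound is strictly weaker than a uniform one, so the (ii) argument cannot simply be repeated. The crux is to choose the cut-off so that $|\nabla\eta_R|$ decays \emph{just} fast enough that the $L^1$ norm of $f$ on the annulus controls the boundary contribution, while the Bishop--Gromov comparison for the quadratically decaying Ricci bound still gives a usable volume estimate. This balance is precisely what Karp and Li arranged in \cite{KL}, and it is the step I would expect to require the most care.
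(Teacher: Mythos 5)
The paper offers no proof of Theorem \ref{MP}: it is quoted from the literature with the citations \cite{Y} and \cite{KL}, so there is nothing internal to compare your argument against; I can only judge your sketch on its own merits. Parts (i) and (ii) are handled correctly and by the standard routes. For (i), the Omori--Yau sequence $x_k$ with $f(x_k)\to\sup_M f$ and $\Delta f(x_k)<1/k$, fed into $\Delta f\ge\varepsilon f$, gives $\sup_M f\le 0$ and hence $f\equiv 0$. For (ii), the Caccioppoli computation with test function $\eta_R^2 f^{p-1}$ is exactly Yau's argument: the coercive term $(p-1)\int\eta_R^2f^{p-2}|\nabla f|^2$ absorbs the cross term and the $L^p$ hypothesis kills the right-hand side as $R\to\infty$. (Minor point: for $1<p<2$ you should regularize, e.g.\ use $(f+\delta)^{p-1}$ or work on $\{f>0\}$, so that $f^{p-2}|\nabla f|^2$ makes sense at zeros of $f$; this is routine.)

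The genuine gap is in (iii). You propose to run ``the same subharmonic cut-off method'' with a cut-off calibrated to the volume growth $\mathrm{Vol}(B_R)\le e^{CR^2}$, but the cut-off method degenerates exactly at $p=1$: the coercive term in the Caccioppoli inequality carries the factor $p-1$, and with the test function $\eta^2f^{0}=\eta^2$ one only gets $0\le\int\eta^2\Delta f\,dv_g=-2\int\eta\,\langle\nabla\eta,\nabla f\rangle\,dv_g$, which contains no quadratic term in $\nabla f$ to absorb against, no matter how the cut-off's decay is tuned to the volume growth. Moreover, without the curvature hypothesis the $L^1$ statement is simply false (there are complete manifolds carrying nonconstant nonnegative $L^1$ subharmonic functions), so the Ricci bound must enter in an essentially analytic way rather than merely through a Bishop--Gromov volume estimate inserted into an integration-by-parts argument. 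The actual proof in \cite{KL} (and in P.~Li's work) goes through the heat semigroup: the quadratic Ricci lower bound yields $\mathrm{Vol}(B_R)\le e^{CR^2}$, hence conservativeness of $P_t=e^{t\Delta}$ on $L^1$; subharmonicity gives $P_tf\ge f$ pointwise and nondecreasing in $t$, while conservativeness gives $\int_MP_tf\,dv_g=\int_Mf\,dv_g$, forcing $P_tf=f$, so $f$ is harmonic, after which one upgrades to constancy. You correctly flagged (iii) as the delicate case, but the mechanism you propose for it is not the one that works, so as written your proposal does not establish part (iii).
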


Now we are ready to give some classifications of biharmonic hypersurfaces with constant scalar curvature in a non--positively curved Einstein manifold.

\begin{proposition}\label{main}
A complete biharmonic hypersurface $M^m\hookrightarrow (N^{m+1},h)$ of constant scalar curvature in a non-positively curved Einstein manifold $(N^{m+1}, h)$
is minimal if one of the following occurs:\\
$(a)$ $|\nabla H|\in L^p$, for some $2<p<\infty$.\\
$(b)$ $|\nabla H|\in L^2$ and the Ricci curvature is bounded from below by $-c\{1+r^2(x)\}$, where $r(x)$ is the distance function on $M$.
\end{proposition}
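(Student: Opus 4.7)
The plan is to adapt the Bochner-formula argument of Theorems~\ref{main2} and~\ref{CPTE} to the non-compact setting, replacing integration by parts with the maximum principles of Theorem~\ref{MP}. Since $M$ is only assumed to have constant scalar curvature (not to be Einstein), ${\rm Ric}^M(\nabla H,\nabla H)$ can no longer be read off directly; instead I return to the Gauss equation~(\ref{RI}), which forces me to absorb an ambient curvature term. This is precisely where the non-positive-curvature hypothesis on $N$ will be used.

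Setting $X=Y=\nabla H$ in~(\ref{RI}), using ${\rm Ric}^N=\lambda h$, and substituting $A(\nabla H)=-\tfrac{m}{2}H\nabla H$ from~(\ref{Ein}), I obtain
\begin{equation*}
{\rm Ric}^M(\nabla H,\nabla H)=\Bigl(\lambda-\tfrac{3}{4}m^2H^2\Bigr)|\nabla H|^2-{\rm R}^N(\nabla H,\xi,\nabla H,\xi).
\end{equation*}
Since ${\rm Scal}^M$ and ${\rm Scal}^N=(m+1)\lambda$ are both constant, (\ref{SCAL}) gives $\nabla|A|^2=2m^2H\nabla H$, and combined with $\Delta H=H(|A|^2-\lambda)$ this yields $\langle\nabla H,\nabla\Delta H\rangle=(|A|^2-\lambda+2m^2H^2)|\nabla H|^2$. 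Plugging both identities, along with $|\nabla\,{\rm d}H|^2\ge\tfrac{1}{m}(\Delta H)^2$, into the Bochner formula produces
\begin{equation*}
\tfrac{1}{2}\Delta|\nabla H|^2\ge\tfrac{1}{m}(\Delta H)^2+\Bigl(|A|^2+\tfrac{5}{4}m^2H^2\Bigr)|\nabla H|^2-{\rm R}^N(\nabla H,\xi,\nabla H,\xi).
\end{equation*}
The non-positive-curvature assumption makes ${\rm R}^N(\nabla H,\xi,\nabla H,\xi)\le 0$, so every term on the right is non-negative and in particular $\Delta|\nabla H|^2\ge 0$ on $M$.

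I then apply Theorem~\ref{MP} to $f=|\nabla H|^2\ge0$. In case~(a), $f\in L^{p/2}$ with $p/2>1$, so part~(ii) forces $f$ to be constant; in case~(b), $f\in L^1$ and the Ricci lower bound allow part~(iii) to do the same. Once $\Delta|\nabla H|^2\equiv0$, each non-negative summand on the right-hand side must vanish; in particular $H^2|\nabla H|^2\equiv 0$. If the constant value of $|\nabla H|^2$ were positive, this would force $H\equiv 0$ and hence $|\nabla H|^2\equiv 0$, a contradiction; therefore $\nabla H\equiv 0$, so $H$ is constant. The first biharmonic equation then reduces to $H(|A|^2-\lambda)=0$, and since $|A|^2\ge 0\ge\lambda$, the alternative $|A|^2=\lambda$ forces $A\equiv 0$ and hence $H\equiv 0$; in every case $M$ is minimal.

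The main obstacle is the very first step: without the Einstein hypothesis on $M$, the Gauss-equation expression for ${\rm Ric}^M(\nabla H,\nabla H)$ carries an ambient sectional curvature term whose sign is not a priori controlled, and only the non-positive-curvature hypothesis on $N$ lets this term enter the Bochner inequality with a favourable sign so that the right-hand side is manifestly non-negative and accessible to Theorem~\ref{MP}.
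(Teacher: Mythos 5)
Your proposal is correct and follows essentially the same route as the paper: the Gauss equation plus the tangential biharmonic equation to control ${\rm Ric}^M(\nabla H,\nabla H)$, constant scalar curvature to get $\nabla|A|^2=2m^2H\nabla H$, the Bochner formula to show $|\nabla H|^2$ is subharmonic (using non-positive curvature of $N$ to discard the ambient term), the maximum principles (ii)/(iii), and finally $|A|^2=\lambda\le 0$ to rule out the non-minimal alternative. The only differences are cosmetic: you retain the $\tfrac1m(\Delta H)^2$ and $-{\rm R}^N(\nabla H,\xi,\nabla H,\xi)$ terms explicitly rather than bounding them away early, and you conclude $\nabla H\equiv0$ from $H^2|\nabla H|^2\equiv0$ instead of from $|\nabla H^2|^2\equiv 0$.
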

\begin{proof}

If the ambient space $(N^{m+1}, h)$ is a non-positively curved Einstein space with $\widetilde{\rm Ric}=\lambda h$, then from (\ref{RI}) we have
\begin{eqnarray}\notag
{\rm Ric}(\nabla H,\nabla H) &\ge&   \lambda|\nabla H|^2+mH\langle A(\nabla H),\nabla H\rangle-\langle A(\nabla H),A(\nabla H)\rangle\\\label{NA}
&=& (\lambda-\frac{3}{4}m^2H^2)|\nabla H|^2, 
\end{eqnarray}
where the equality was obtained by using the second equation of the biharmonic hypersurface equation $(\ref{Ein})$.

Substituting (\ref{NA}) and (\ref{NA1}) into Bochner formula we have
\begin{eqnarray}\notag
\Delta|\nabla H|^2
&=& 2\{ | \nabla {\rm d} H|^2+{\rm Ric}(\nabla H,\nabla H)+\langle \nabla H,\nabla \Delta H\rangle\}\\\notag
&\ge& 2\{ {\rm Ric}(\nabla H,\nabla H)+\langle \nabla H,\nabla \Delta H\rangle \}\\\notag
&=&2[(\lambda-\frac{3}{4}m^2H^2)+(|A|^2-\lambda +2m^2H^2)] |\nabla H|^2\\\notag
&=& 2\Big(\frac{5}{4}m^2H^2+|A|^2\Big)|\nabla H|^2\ge 2\Big(\frac{5}{4}m^2H^2+mH^2\Big)|\nabla H|^2\\\label{NA3}
&=&\frac{1}{8}m(5m+4)|\nabla H^2|^2\ge 0.
\end{eqnarray}

Using  maximum principles (ii) and (iii)  in Theorem $\ref{MP}$ we have $|\nabla H|$ is constant.
Using this and  $(\ref{NA3})$ again we conclude that  $|\nabla H^2|=0$. It follows that $H$ is constant. If $H={\rm constant}\ne 0$, then,
by the first equation of $(\ref{Ein})$, we have $|A|^2-\lambda=0$. It follows that
 $|A|^2= \lambda\leq0$ since $N$ is non-positively curved. From this, we have $|A|^2=0$, which means that $M$ is totally geodesic and hence minimal, which is a contradiction. So we are left with the only conclusion that $H=0$, that is, the hypersurface is minimal. Thus, we complete the proof of the proposition.
\end{proof}

\begin{remark}
(i)  For a classification of complete biharmonic submanifolds with Ricci curvature bounded from below in a nonpositively curved manifold see \cite{Ma2}.\\
(ii) Our Theorem \ref{CPTE} and Proposition \ref{main} give some classifications of biharmonic hypersurface in an Einstein space. We refer the recent work \cite{IS1} and \cite{IS2} for some examples and classifications  of constant mean curvature proper biharmonic hypersurfaces in a special class of Einstein spaces--the compact Riemannian symmetric space with a $G$-invariant metric. Also, for some classifications of $f$-biharmonic hypersurfaces in an Einstein space, see the second author's paper  \cite{Ou2}.
\end{remark}

As a corollary of Proposition $\ref{main}$, we have an affirmative partial answer to Chen's conjecture:

\begin{corollary}\label{Ch2}
Any complete biharmonic hypersurface with constant scaler curvature and $|\nabla H|\in L^p(M)$ for some $2<p<\infty$ in an Euclidean space  is minimal.
\end{corollary}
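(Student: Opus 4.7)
The plan is short because this is a direct specialization of Proposition \ref{main}. Euclidean space $\r^{m+1}$ is flat, so its sectional curvatures vanish identically and it is trivially non-positively curved; its Ricci tensor also vanishes, so $\r^{m+1}$ is Einstein in the sense of Proposition \ref{main} with $\lambda = 0$. The other hypotheses, completeness, constant scalar curvature, biharmonicity, and $|\nabla H|\in L^p(M)$ for some $2<p<\infty$, are given. Hence case $(a)$ of Proposition \ref{main} applies verbatim and forces the hypersurface to be minimal.

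If one unwraps the mechanism inside the proof of Proposition \ref{main} with $\lambda=0$, it proceeds as follows. The Bochner identity, combined with the biharmonic system (\ref{Ein}) and Newton's inequality $|A|^2 \ge mH^2$, yields
\begin{equation*}
\Delta |\nabla H|^2 \;\ge\; \tfrac{1}{8}\, m(5m+4)\, |\nabla H^2|^2 \;\ge\; 0,
\end{equation*}
so $|\nabla H|^2$ is a nonnegative subharmonic function on a complete manifold. The hypothesis $|\nabla H|\in L^p(M)$ with $2<p<\infty$ gives $|\nabla H|^2 \in L^{p/2}(M)$ with $1<p/2<\infty$, so the $L^p$-type maximum principle of Theorem \ref{MP}(ii) forces $|\nabla H|^2$ to be constant. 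Reinserting this into the Bochner inequality collapses $|\nabla H^2|$ to zero, hence $H$ is constant. The first equation of (\ref{Ein}) with $\lambda=0$ then reads $H|A|^2=0$; a nonzero constant $H$ would give $|A|^2=0$ and hence $H=0$, a contradiction, so $H\equiv 0$.

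The main obstacle is essentially nil: all the analytic content, namely the Bochner estimate, the $L^p$-Liouville argument, and the algebraic ruling-out of $H\ne 0$ using $\lambda\le 0$, has been packaged into Proposition \ref{main}, and Corollary \ref{Ch2} is nothing more than the flat specialization $\lambda=0$. The only thing worth noting is the passage from $|\nabla H|\in L^p$ to $|\nabla H|^2 \in L^{p/2}$, which is precisely why the hypothesis is imposed with $p>2$ rather than with $p>1$, so that the auxiliary function to which Theorem \ref{MP}(ii) is applied has integrability exponent strictly greater than one.
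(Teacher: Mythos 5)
Your proposal is correct and matches the paper exactly: the paper states Corollary \ref{Ch2} as an immediate consequence of Proposition \ref{main}, obtained by observing that Euclidean space is a flat (hence non-positively curved) Einstein manifold with $\lambda=0$ and invoking case $(a)$. Your remark on why $p>2$ is needed (so that $|\nabla H|^2\in L^{p/2}$ with $p/2>1$ for Theorem \ref{MP}(ii)) is a correct reading of the mechanism inside that proposition.
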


\begin{proposition}\label{P28}
Let $M^m\hookrightarrow S^{m+1}$ be a complete biharmonic hypersurface with constant scaler curvature in a sphere with $H^2\geq \frac{2\varepsilon+4}{m(5m+4)}$  $(for~some~\varepsilon >0)$ .
If one of the following is satisfied, then the mean curvature is constant.

$(A)$ The Ricci curvature of $M$ is bounded from below and $|\nabla H|$ is bounded from above.

$(B)$ $|\nabla H|\in L^p$, for $2<p<\infty$.

$(C)$ $|\nabla H|\in L^2$ and the Ricci curvature is bounded from below by $-c\{1+r^2(x)\}$, where $r(x)$ is the distance function on $M$.
\end{proposition}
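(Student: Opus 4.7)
The plan is to imitate the Bochner-formula calculation used in the proof of Proposition~\ref{main}, but carried out in the sphere (so with ambient Einstein constant $\lambda=m$ and sectional curvature $C=1$), and to exploit the hypothesis $H^{2}\ge \tfrac{2\varepsilon+4}{m(5m+4)}$ precisely so that the resulting differential inequality for $|\nabla H|^{2}$ takes the form
\[
\Delta |\nabla H|^{2}\ \ge\ \varepsilon\, |\nabla H|^{2}.
\]
Once this pointwise inequality is in hand, all three conclusions $(A)$, $(B)$, $(C)$ will follow immediately by setting $f:=|\nabla H|^{2}$ and invoking parts $(i)$, $(ii)$, $(iii)$ of the maximum principle in Theorem~\ref{MP}, respectively.

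For the first step I would start from the Bochner formula
\[
\tfrac{1}{2}\Delta|\nabla H|^{2}=|\nabla\mathrm{d}H|^{2}+{\rm Ric}^{M}(\nabla H,\nabla H)+\langle\nabla H,\nabla\Delta H\rangle,
\]
drop the non-negative term $|\nabla\mathrm{d}H|^{2}$, and insert two identities already derived earlier in the paper. The first is equation~(\ref{43}) with $C=1$, giving ${\rm Ric}^{M}(\nabla H,\nabla H)=\bigl((m-1)-\tfrac{3}{4}m^{2}H^{2}\bigr)|\nabla H|^{2}$; the second is equation~(\ref{NA1}) with $\lambda=m$, giving $\langle\nabla H,\nabla\Delta H\rangle=(|A|^{2}-m+2m^{2}H^{2})|\nabla H|^{2}$, valid here because its derivation only requires constant scalar curvature of the hypersurface, not the Einstein condition on $M$. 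Combining the two contributions and simplifying yields
\[
\tfrac{1}{2}\Delta|\nabla H|^{2}\ \ge\ \bigl(|A|^{2}-1+\tfrac{5}{4}m^{2}H^{2}\bigr)|\nabla H|^{2}.
\]
Newton's inequality $|A|^{2}\ge mH^{2}$ together with the hypothesis on $H^{2}$ then gives $\Delta|\nabla H|^{2}\ge \bigl(\tfrac{m(5m+4)}{2}H^{2}-2\bigr)|\nabla H|^{2}\ge \varepsilon |\nabla H|^{2}$, as desired.

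With the key inequality established, each case is a one-line application of Theorem~\ref{MP} to $f=|\nabla H|^{2}\ge 0$. In case $(A)$, $f$ is bounded above because $|\nabla H|$ is, and ${\rm Ric}^{M}$ is bounded below by hypothesis, so Theorem~\ref{MP}(i) forces $f$ to be constant; then $\Delta f=0\ge\varepsilon f$ with $\varepsilon>0$ forces $f\equiv 0$. In case $(B)$, $|\nabla H|\in L^{p}$ with $p>2$ gives $f\in L^{p/2}$ with $p/2>1$ and $\Delta f\ge \varepsilon f\ge 0$, so Theorem~\ref{MP}(ii) yields the same conclusion. In case $(C)$, $|\nabla H|\in L^{2}$ gives $f\in L^{1}$ and the Ricci lower bound matches the hypothesis of part (iii), closing the argument in the same way. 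In all three cases $\nabla H\equiv 0$, hence $H$ is constant.

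The main obstacle, modest but genuine, is the \emph{sign} issue in the Bochner step: in Proposition~\ref{main} the ambient manifold is non-positively curved, which gave a favourable sign for ${\rm Ric}^{M}(\nabla H,\nabla H)$, whereas here the positive curvature of $S^{m+1}$ introduces an unfavourable ``$-1$'' in the coefficient $|A|^{2}-1+\tfrac{5}{4}m^{2}H^{2}$. The quantitative lower bound $H^{2}\ge \tfrac{2\varepsilon+4}{m(5m+4)}$ is engineered precisely so that, after applying Newton's inequality and multiplying through by $2$, the troublesome $-2$ is absorbed and a clean term $\varepsilon |\nabla H|^{2}$ remains on the right-hand side, opening the door to the three maximum principles.
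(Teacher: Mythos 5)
Your proposal is correct and follows essentially the same route as the paper: the authors likewise take the Bochner inequality (\ref{DNH}), discard the Hessian term, apply Newton's inequality $|A|^2\ge mH^2$ and the hypothesis on $H^2$ to obtain $\Delta|\nabla H|^2\ge\varepsilon|\nabla H|^2$, and then invoke Theorem \ref{MP} (i)--(iii) with $f=|\nabla H|^2$. Your observation that (\ref{NA1}) needs only constant scalar curvature (not the Einstein condition) is exactly the point the paper relies on implicitly via (\ref{GD2}).
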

\begin{proof}
It follows from (\ref{DNH}) that
\begin{align*}
\Delta|\nabla H|^2
\geq&
2\Big(-1+\frac{5}{4}m^2H^2+|A|^2\Big)|\nabla H|^2.
\end{align*}
Using the  Newton's formula $|A|^2\geq m H^2$ for a hypersurface  we have 
\begin{align*}
\Delta|\nabla H|^2
\geq&
2\Big(-1+\frac{m}{4}(5m+4)H^2\Big)|\nabla H|^2.
\end{align*}
From this, together with the assumption that $H^2\geq \frac{2\varepsilon+4}{m(5m+4)}$, we have
\begin{align}\label{NH2}
\Delta|\nabla H|^2
\geq&
\varepsilon|\nabla H|^2.
\end{align}
Using the maximum principles (i), (ii), and (iii) in Theorem $\ref{MP}$ with $f=|\nabla H|^2$ we have obtain the proposition.
\end{proof}

\begin{ack}
We would like to thank C. Oniciuc for some very useful comments that help to improve the manuscript.
\end{ack}

\end{document}